\documentclass{article}
\usepackage{amsmath}
\usepackage{amsthm}
\usepackage{amsfonts}
\usepackage{amssymb}
\usepackage{amscd}
\usepackage[all]{xy}
\usepackage{graphicx}
\usepackage{xcolor}
\usepackage{enumerate}

\font\smallsl=cmsl10

\usepackage{comment}
\usepackage{hyperref}

\newtheorem{theorem}{Theorem}

\newtheorem{corollary}[theorem]{Corollary}

\newtheorem{lemma}[theorem]{Lemma}

\newtheorem{proposition}[theorem]{Proposition}

\newcommand{\im}{\mathrm {im\,}}

\newcommand{\diag}{{\rm diag}}

\newcommand{\ee}{\quad\text{ and }\quad}

\renewcommand{\tilde}{\widetilde}

\usepackage[
    backend=biber,
    style=numeric,
	sorting=nyt,
    natbib=true,
    url=false, 
    doi=true,
    backref=true
    eprint=false
]{biblatex}
\begin{document}

\title{Type of homomorphisms of complex tori}
\author{Juliana Coelho\\{\scriptsize julianacoelhochaves@id.uff.br}
}

\maketitle

\begin{abstract}
Using determinantal divisors of integral matrices, that is, 
greatest common divisors of minors of fixed order,
we introduce the notion of type of a homomorphism $f$ of complex tori, 
which is similar to the type of a polarization.
We show that the type is invariant under composition with isomorphisms, and that it
completely describes the kernel of $f$ as a group.
More precisely, 
the quotient of the kernel by its connected component
containing $0$ is a
product of cyclic groups whose orders are determined by the type of $f$.
Since the type can be computed from any rational representation of $f$, this gives 
an effective way to determine the kernel of a homomorphism.
As a consequence, we compute the classic invariants degree and exponent of $f$, when $f$ has finite kernel. 
When $f$ is an isogeny, we also compute the type of its inverse isogeny from that of $f$.
Finally, we compare the type of a polarization to the type of its associated isogeny.
\end{abstract}


\section{Introduction}


A \emph{complex torus} of dimension $g$ is a quotient $T=V/L$ of a complex vector space $V$ of dimension $g$ and a lattice $L$ on $V$, that is, a discrete subgroup of $V$ of rank $2g$ generating $V$ as a real vector space. 
Given bases $v_1,\ldots,v_g$ of $V$ and $l_1,\ldots,l_{2g}$ of $L$, the \emph{period matrix} of $T$ is the matrix $\Pi$ whose $j$-th column consists of the coefficients of the expression of $l_j$ in the basis $v_1,\ldots,v_g$. 

For complex tori $T_1=V_1/L_1$ and $T_2=V_2/L_2$, a \emph{homomorphism} $f\:T_1\rightarrow T_2$ is induced by a linear map $\rho_a(f)\:V_1\rightarrow V_2$ called the \emph{analytic representation} of $f$. 
The restriction of $f$ to $L_1$ gives a group homomorphism $\rho_r(f)\:L_1\rightarrow L_2$ called the \emph{rational representation} of $f$. Fixing bases for $V_j$ and $L_j$ for $j=1,2$, we may consider the matrices $[\rho_a(f)]$ and $[\rho_r(f)]$ associated to the analytic and rational representations of $f$. 
If $\Pi_1$ and $\Pi_2$ are the period matrices of $T_1$ and $T_2$ with respect to the chosen bases, the Hurwitz or matricial equation of $f$ is given by 
$$[\rho_a(f)]\Pi_1=\Pi_2[\rho_r(f)].$$

The kernel $\ker f$ of the homomorphism $f$ is a compact subgroup of $T_1$. 
If $\ker f$ is finite, we define the \emph{degree} $\deg(f)$  of $f$ as its order and
if $\ker f$ is not finite,  then we set $\deg(f):=0$. 
In any case, $\ker f$ has a finite number of connected components,
which are all translates of the connected component $(\ker f)_0$ containing 0.
Moreover, $(\ker f)_0$ is a subtorus of $T_1$ and thus a subgroup of $\ker f$, and
the number of connected components of $\ker f$ is the index of $(\ker f)_0$ in $\ker f$.
If $f$ has finite kernel, we also define the \emph{exponent} $e(f)$ of $f$ as the exponent of the finite group $\ker f$, 
that is, $e(f)$ is the smallest $n\in\mathbb Z_{>0}$ such that  
$nt=0$ for all $t\in\ker f$.

The main goal of this paper is to give a procedure to describe the kernel of a homomorphism $f$ of complex tori and, in particular, to compute its degree and exponent.
To tackle this problem 
we use minors of the matrix of its rational representation.
We define the \emph{determinantal divisor} $\delta_k(f)$ of $f$ to be the greatest common divisor of the minors of order $k$ of $[\rho_r(f)]$, when at least one of these minors is non-zero, and we set $\delta_k(f)=0$ otherwise. 
The determinantal divisors are independent of the choice of bases and are invariant under isomorphism 
(Proposition \ref{prop:smith-normal-form-homo}). 
If $r$ is the rank of $[\rho_r(f)]$, we may define 
 $$d_k(f):=\delta_k(f)/\delta_{k-1}(f)$$ 
for $1\leq k\leq r$, where $\delta_0(f):=1$.
Then $d_k(f)\in \mathbb Z$ and we define  the \emph{type} of $f$ as $(d_1(f),\ldots,d_r(f))$.
Clearly the type is also 
invariant under isomorphism, since this holds for the determinantal divisors. 

The type of a homomorphism
is a much finer invariant than the degree and the exponent, and one that encompasses both concepts.
More precisely, we show that the type of $f$ completely determines its kernel (Theorem \ref{teo:type-detrmines-kernel}). 
As a consequence, it can be used to compute the degree and the exponent of $f$, and also the number of connected components of $\ker f$ (Corollaries \ref{cor:grauisog} and \ref{cor:exponent}). 
Summing up, we show:

\smallskip

\noindent{\bf Theorem.} {\it Let $f\:T_1\rightarrow T_2$ be a homomorphism of complex tori.
Then 
$$\frac{\ker f}{(\ker f)_0}\cong \mathbb Z_{d_1(f)}\times \cdots \times \mathbb Z_{d_{2(g_1-h)}(f)},
$$
where 
$g_1=\dim T_1$ and $h=\dim(\ker f)_0$.  

As a consequence, 
the number of connected components of $\ker f$ is equal to $\delta_{2(g_1-h)}(f)$.
Moreover, if $\ker f$ is finite then
$$deg(f)=\delta_{2g_1}(f) \ee e(f)=d_{2g_1}(f).$$
}

\smallskip

In addition, let $f\:T_1\rightarrow T_2$ be an \emph{isogeny}
and $f'\:T_2\rightarrow T_1$  be its
\emph{inverse isogeny}, that is,
$f$ is a surjective homomorphism with finite kernel
and $f'$ satisfies
$f\circ f'=e(f)$ and $f'\circ f=e(f)$.
Then we show how to compute the type of $f'$ from that of $f$ (Corollary \ref{cor:exponent}). 
Furthermore, 
we present an alternative way of computing the exponent of an isogeny and, as a consequence, we show that the exponent of a composition $g\circ f$ of isogenies divides the product of the exponents of $g$ and $f$ (Corollaries \ref{cor:expoente2} and \ref{cor:composition-exponent}).

Finally, recall that a \emph{polarization} on a complex torus $T=V/L$ is a positive definite hermitian form $H$ whose imaginary part $E$ has integral values on $L$.
We show how the classical type of the polarization $H$ can be computed from the type of the isogeny associated to it (Proposition \ref{prop:type-polarization}).

%
%
%
%
%

\section{Determinantal divisors of integral matrices}\label{sec:divisors-matrices}

An \emph{integral matrix} is a matrix with entries in  $\mathbb Z$.
We denote by $M(n\times m,\mathbb Z)$ the set of integral matrices of order $n\times m$,
and by $GL_n(\mathbb Z)$ the subring of $M(n\times n,\mathbb Z)$ consisting of invertible integral matrices, that is, matrices with integral entries having determinant $\pm 1$. 

For $A\in M(n\times m,\mathbb Z)$ and $k\in\mathbb Z_{>0}$,
the \emph{$k$-th determinantal divisor} $\delta_k(A)$ is the greatest common divisor of the minors of order $k$ of $A$, if at least one of these minors is non-zero. If all minors of order $k$ of $A$ are zero or if $k>\min\{n,m\}$, we set $\delta_k(A)=0$. 
Note that $\delta_1(A)$ is the greatest common divisor of the entries of $A$ and that, if $n=m$, then 
$\delta_n(A)=|\det(A)|$.


The following results are standard in the theory of integral matrices and are 
included here for the sake of completeness.
They can be found, for instance, in \cite[Chapter II]{newman}.

\begin{lemma}\label{lem:determinantal-divisors}
Let  $A\in M(n\times m ,\mathbb Z)$. 
\begin{enumerate}[(a)]
\item If $\delta_k(A)=0$ then $\delta_{k+1}(A)=0$. 
If $\delta_k(A)\neq 0$, then $\delta_k(A)$ divides $\delta_{k+1}(A)$. 
In particular if $A\in GL(n,\mathbb Z)$ then $\delta_k(A)=1$ for every $1\leq k\leq n$.

\item Let $B\in M(m\times p,\mathbb Z)$. 
If $\delta_k(A)=0$ or $\delta_k(B)=0$ then $\delta_k(A\cdot B)=0$. 
If $\delta_k(A)\neq 0$ and $\delta_k(B)\neq 0$ then $\delta_k(A)\cdot \delta_k(B)$ divides $\delta_k(A\cdot B)$. 

\item If
$B\in GL(n,\mathbb Z)$ 
and
$B'\in GL(m,\mathbb Z)$ 
then 
$\delta_k(A)=\delta_k(B\cdot A\cdot B')$. 
\end{enumerate}
\end{lemma}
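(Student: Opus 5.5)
The plan is to prove (a) directly from Laplace expansion, (b) from the Cauchy--Binet formula, and then to get (c) as a formal consequence of (b).

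\emph{Part (a).} Expand each $(k+1)\times(k+1)$ minor of $A$ along its first row. This writes the minor as an integral linear combination of $k\times k$ minors of $A$. Hence, if all $k\times k$ minors vanish, so do all $(k+1)\times(k+1)$ minors; this covers the degenerate case $k+1>\min\{n,m\}$ as well, by the convention. If $\delta_k(A)\neq 0$, then $\delta_k(A)$ divides every $(k+1)$-minor, so it divides their gcd. When $\delta_{k+1}(A)=0$ the divisibility holds trivially, since $0$ is a multiple of everything.

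For $A\in GL(n,\mathbb Z)$ we have $\delta_n(A)=|\det A|=1$. Moreover $\delta_k(A)\neq 0$ for all $k\le n$, since otherwise $\delta_n(A)=0$ by the first claim. The chain of divisibilities then forces each $\delta_k(A)$ to divide $1$, so $\delta_k(A)=1$.

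\emph{Part (b).} Use Cauchy--Binet. For row set $I$ and column set $J$ with $|I|=|J|=k$,
$$\det (AB)_{I,J}=\sum_{|S|=k}\det A_{I,S}\,\det B_{S,J},$$
where $S$ runs over the $k$-subsets of $\{1,\dots,m\}$. If $k>m$, then $\mathrm{rank}(AB)<k$, so all $k$-minors of $AB$ vanish. If $\delta_k(A)=0$ or $\delta_k(B)=0$, every term in the sum vanishes, and so $\delta_k(AB)=0$. Otherwise every term is divisible by $\delta_k(A)\delta_k(B)$, hence so is each $k$-minor of $AB$, and therefore so is their gcd (allowing the gcd to be $0$).

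\emph{Part (c).} By (b) applied twice, $\delta_k(A)=0$ implies $\delta_k(BAB')=0$. If $\delta_k(A)\ne0$, then $\delta_k(B)=\delta_k(B')=1$ by (a). So by (b), $\delta_k(A)$ divides $\delta_k(BAB')$, or $\delta_k(BAB')=0$.

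Now apply the same reasoning to $A=B^{-1}(BAB')B'^{-1}$, using that $B^{-1},B'^{-1}$ are in $GL(\mathbb Z)$. This shows that $\delta_k(A)$ and $\delta_k(BAB')$ vanish simultaneously, and when nonzero each divides the other. Both are positive, so they are equal.

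The only step requiring care is the bookkeeping of the zero conventions in (b) and (c), in particular the cases $k>\min$ of the dimensions. Cauchy--Binet itself is standard.
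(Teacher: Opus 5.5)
Your proof is correct and takes the route the paper indicates: the paper only cites the determinant (Laplace) expansion and the Cauchy--Binet formula, and you carry out exactly those arguments, obtaining (c) from (b) combined with (a) applied to $B$, $B'$ and their inverses. Your handling of the zero conventions, including the cases $k>\min\{n,m\}$, is also sound.
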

\begin{proof}
Follows from the determinant and Cauchy-Binet formulas (see for instance \cite[Section I.4]{gantamacher}).
\end{proof}

Note that, unlike the determinant, the determinantal divisors are usually not multiplicative. Indeed, for
$$A=\left(\begin{array}{cc}
1&0\\0&2
\end{array}\right)
\ee
B=\left(\begin{array}{cc}
2&0\\0&1
\end{array}\right),$$
we have $\delta_1(A)=\delta_1(B)=1$ but $\delta_1(A\cdot B)=2$. 
\smallskip

We say that two matrices $A_1,A_2\in M(n\times m,\mathbb Z)$ are \emph{equivalent} if there exist
$B\in GL(n,\mathbb Z)$ 
and
$B'\in GL(m,\mathbb Z)$ 
such that $A_2=B\cdot A_1\cdot B'$. By the previous lemma, equivalent matrices have the same determinantal divisors.
The converse follows from the Smith normal form. 

\smallskip

For $d_1,\ldots,d_r\in\mathbb Z$, denote by $\diag(d_1,\ldots,d_r)$ the diagonal matrix with main diagonal given by $d_1,\ldots,d_r$.

\begin{lemma}\label{lem:smith-normal-form} \emph{(Smith normal form)}
Let $A\in M(n\times m ,\mathbb Z)$ of rank $r$. Then there exist unique
$d_1,\ldots,d_r\in\mathbb Z_{>0}$ with $d_k|d_{k+1}$ 
such that $A$ is equivalent to a matrix of the form
\begin{equation}\label{eq:smith-normal-form-matrix}
\left(\begin{array}{cc}
D & 0\\ 0 & 0
\end{array}\right),
\end{equation}
where $D=\diag(d_1,\ldots,d_r)$.
\end{lemma}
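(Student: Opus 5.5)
Existence comes from a reduction algorithm using row and column operations over $\mathbb Z$; uniqueness comes from the determinantal divisors and Lemma \ref{lem:determinantal-divisors}(c).

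\emph{Existence.} We induct on $\min\{n,m\}$; the case $A=0$ is trivial. Among all matrices equivalent to $A$, choose one, $A'$, whose nonzero entry of smallest absolute value is as small as possible. Permuting rows and columns (permutation matrices lie in $GL(\cdot,\mathbb Z)$), we may assume this entry is $a=a'_{11}$.

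First we show that $a$ divides every entry of the first row and the first column. Suppose instead that $a\nmid a'_{1j}$. Subtracting $q$ times column $1$ from column $j$, where $a'_{1j}=qa+s$ with $0<s<|a|$, yields an equivalent matrix with a nonzero entry smaller than $|a|$, a contradiction. The same argument with row operations handles the first column. Clearing these entries, we may therefore assume $A'=\left(\begin{smallmatrix} a&0\\0&A''\end{smallmatrix}\right)$.

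Next we show that $a$ divides every entry of $A''$. If some entry $a''_{ij}$ were not divisible by $a$, adding row $i$ to row $1$ would put $a''_{ij}$ in the first row, and the previous step would again produce a smaller entry, a contradiction.

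Now apply induction to $A''$, which is equivalent to a normal form $\diag(d_2,\dots)$. Every entry of $A''$ is a multiple of $a$, so $a$ divides $\delta_1(A'')$. By Lemma \ref{lem:determinantal-divisors}(c), $\delta_1(A'')=d_2$, so $a\mid d_2$. Set $d_1=|a|$, changing sign with $\diag(-1,1,\dots)$ if necessary. The number of nonzero diagonal entries equals $r$, since rank is preserved by multiplication by invertible matrices.

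\emph{Uniqueness.} By Lemma \ref{lem:determinantal-divisors}(c), $A$ has the same determinantal divisors as the matrix \eqref{eq:smith-normal-form-matrix}. We compute these directly for the diagonal form. A nonzero $k$-minor of $\left(\begin{smallmatrix}D&0\\0&0\end{smallmatrix}\right)$ with $k\le r$ must use $k$ matching rows and columns among the first $r$, so it equals $\prod_{i\in S}d_i$ for some $S$ with $|S|=k$. Because $d_i\mid d_{i+1}$, each such product is divisible by $d_1\cdots d_k$, and that product itself occurs as a minor. Hence $\delta_k(A)=d_1\cdots d_k$ for all $k\le r$. This gives $d_k=\delta_k(A)/\delta_{k-1}(A)$, so the $d_k$ are determined by $A$ alone.

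The step needing the most care is the minimality argument in the existence part, in particular ensuring that $a$ divides all of $A''$ and not only the first row and column. The row-addition trick above is what handles it.
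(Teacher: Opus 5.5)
Your proof is correct. The minimal-entry reduction gives existence, with $d_1\mid d_2$ obtained from $\delta_1(A'')$. Computing the determinantal divisors of the diagonal form and applying Lemma \ref{lem:determinantal-divisors}(c) gives uniqueness.

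The paper gives no argument of its own and only cites Newman's Theorem II.9. Your write-up is the standard argument behind that reference, specialized to $\mathbb Z$ and made self-contained.
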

\begin{proof} See, for instance \cite[Theorem II.9]{newman}.
\end{proof}

The integers $d_1,\ldots,d_r$ determined 
in the previous lemma
are called the \emph{invariant factors} of $A$ and are denoted by $d_1(A),\ldots,d_r(A)$.
Note that $r$ is the rank of the matrix $A$, that is, the smallest $k$ such that $\delta_{k+1}(A)=0$. 
Since the determinantal divisors are invariant under equivalence, we see that 
$$\delta_k(A)=d_1(A)\cdots d_k(A),$$
where we set $d_k(A):=0$ if $k>r$.
Moreover, the vector $(d_1,\ldots,d_r)$ 
is the \emph{type} of the matrix $A$. 
It's worth noticing that, as with the determinantal divisors, the invariant factors are also not multiplicative.

\begin{corollary}\label{cor:equiv-type}
Two integral matrices of same order are equivalent 
if and only if they have the same determinantal divisors, 
which happens if and only if they have the same type. 
\end{corollary}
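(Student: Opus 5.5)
The corollary asserts two equivalences: being equivalent, having the same determinantal divisors, and having the same type. I will prove them in a cycle. Let $A_1,A_2\in M(n\times m,\mathbb Z)$.

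\emph{Equivalent implies same determinantal divisors.} Suppose $A_2=B\cdot A_1\cdot B'$ with $B\in GL(n,\mathbb Z)$ and $B'\in GL(m,\mathbb Z)$. Then Lemma \ref{lem:determinantal-divisors}(c) gives $\delta_k(A_1)=\delta_k(A_2)$ for every $k$.

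\emph{Same determinantal divisors implies same type.} First, the rank of $A_i$ equals the largest $k$ with $\delta_k(A_i)\neq 0$. This holds because a matrix has rank $r$ exactly when some $r\times r$ minor is nonzero and all larger minors vanish. Hence $A_1$ and $A_2$ have the same rank $r$. Next, the identity $\delta_k(A)=d_1(A)\cdots d_k(A)$ for $k\leq r$, noted after Lemma \ref{lem:smith-normal-form}, lets us recover each invariant factor as $d_k(A)=\delta_k(A)/\delta_{k-1}(A)$, with $\delta_0:=1$. The denominators are nonzero for $k\leq r$. So the types $(d_1,\ldots,d_r)$ of $A_1$ and $A_2$ coincide.

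That identity deserves a brief justification. By Lemma \ref{lem:smith-normal-form} and part (c) of Lemma \ref{lem:determinantal-divisors}, $\delta_k(A)$ equals $\delta_k$ of the Smith normal form \eqref{eq:smith-normal-form-matrix}. In that diagonal matrix, the nonzero $k\times k$ minors are exactly the products $d_{i_1}\cdots d_{i_k}$ with $i_1<\cdots<i_k\leq r$. Because $d_j\mid d_{j+1}$, each such product is divisible by $d_1\cdots d_k$. That product is itself one of the minors, so it is their gcd.

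\emph{Same type implies equivalent.} If $A_1$ and $A_2$ have the same order $n\times m$ and the same type $(d_1,\ldots,d_r)$, then both are equivalent to the same block matrix \eqref{eq:smith-normal-form-matrix}. Equivalence is an equivalence relation: it is symmetric because inverses of matrices in $GL$ are in $GL$, and transitive because products of matrices in $GL$ are in $GL$. Hence $A_1$ is equivalent to $A_2$.

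No step is a real obstacle. The only points needing care are the minor computation for the diagonal form, and the observation that the type determines $r$, and hence the size of the zero blocks, since the order is fixed.
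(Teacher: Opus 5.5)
Your proof is correct and follows the same route as the paper: Lemma \ref{lem:determinantal-divisors}(c) gives invariance under equivalence, the identity $\delta_k(A)=d_1(A)\cdots d_k(A)$ links determinantal divisors and type, and the Smith normal form together with transitivity of equivalence gives the converse. The only difference is that you spell out what the paper calls clear, namely reading the rank off the $\delta_k$ and computing the gcd of the minors of the diagonal form.
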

\begin{proof}
It is clear that having the same determinantal divisors is equal to having the same type, and that
 equivalent integral matrices have the same type.
Now assume that $A_1,A_2\in M(n\times m,\mathbb Z)$ have the same type $(d_1,\ldots,d_r)$. Then, by Lemma \ref{lem:smith-normal-form}, they are both equivalent to the $n\times m$ matrix of the form \eqref{eq:smith-normal-form-matrix} where $D=\diag(d_1,\ldots,d_r)$ and thus are also equivalent to each other.
\end{proof}

\section{Homomorphisms}\label{sec:divisor-homos}

Let $f\:T_1\rightarrow T_2$ be a homomorphism of complex tori. 
Recall that the matrices of the rational representation $\rho_r(f)$ with respect to different bases are equivalent. 
Thus we let $[\rho_r(f)]$ be the matrix of $\rho_r(f)$ with respect to some basis
and define the \emph{$k$-th determinantal divisor}
$\delta_k(f)$, 
the \emph{$k$-th invariant factor} $d_k(f)$ and the \emph{type} of $f$ as those of $[\rho_r(f)]$. 
As before, we have that $d_k(f)|d_{k+1}(f)$ and
$$\delta_k(f)=d_1(f)\cdots d_k(f)$$
for every $1\leq k\leq r$, where $r$ is the rank of $[\rho_r(f)]$.
Note that
$$r=2\dim \im f= 2(\dim T_1-\dim (\ker f)_0).$$
In particular, 
$f$ has finite kernel if and only if $r=2\dim T_1$. 

It's easy to see that the type of a homomorphism is invariant under composition with isomorphisms.

\begin{proposition}\label{prop:type-homo-invariant-iso}
Let 
$f\:T_1\rightarrow T_2$ and
$h\:T_1'\rightarrow T_2'$
be homomorphisms of complex tori. 
If there exist isomorphisms
$h_1\:T_1\rightarrow T_1'$ and
$h_2\:T_2'\rightarrow T_2$ 
such that 
$f=h_2\circ h\circ h_1$, then $f$ and $h$ have the same type.
\end{proposition}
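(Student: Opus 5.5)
The plan is to reduce the statement to Lemma \ref{lem:determinantal-divisors}(c), using the functoriality of the rational representation. Composition of homomorphisms corresponds to composition of their restrictions to the lattices. So we have
$$\rho_r(f)=\rho_r(h_2)\circ\rho_r(h)\circ\rho_r(h_1).$$
Fix bases of the lattices $L_1,L_2$ of $T_1,T_2$ and $L_1',L_2'$ of $T_1',T_2'$. With respect to these bases we obtain the matrix identity
$$[\rho_r(f)]=[\rho_r(h_2)]\,[\rho_r(h)]\,[\rho_r(h_1)],$$
where each matrix is taken with respect to the corresponding fixed bases.

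The key step is to show that $[\rho_r(h_1)]\in GL(2g_1,\mathbb Z)$ and $[\rho_r(h_2)]\in GL(2g_2,\mathbb Z)$, where $g_j=\dim T_j$. Since $h_1$ is an isomorphism, its inverse $h_1^{-1}\colon T_1'\to T_1$ is again a homomorphism of complex tori. By functoriality, $\rho_r(h_1^{-1})\circ\rho_r(h_1)=\rho_r(\mathrm{id}_{T_1})=\mathrm{id}_{L_1}$, and similarly in the other order. Hence the integral matrix $[\rho_r(h_1)]$ has an integral inverse $[\rho_r(h_1^{-1})]$. In particular it is square, because an isomorphism forces the lattices to have equal rank, and its determinant is $\pm1$. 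The same argument applies to $h_2$. The only point to check is that $\rho_r(h_1)$ really maps $L_1$ into $L_1'$ and its inverse maps $L_1'$ into $L_1$; this holds because both $h_1$ and $h_1^{-1}$ are homomorphisms of tori, so their analytic representations send lattices to lattices.

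Applying Lemma \ref{lem:determinantal-divisors}(c) with $B=[\rho_r(h_2)]$ and $B'=[\rho_r(h_1)]$ gives $\delta_k(f)=\delta_k(h)$ for every $k$. In particular the ranks agree, since the rank is the largest $k$ with $\delta_k\neq0$. The invariant factors $d_k=\delta_k/\delta_{k-1}$ therefore agree as well, so $f$ and $h$ have the same type. Alternatively, Corollary \ref{cor:equiv-type} can be invoked directly, because the two matrices are equivalent. There is no real obstacle in this argument; the step needing most care is the unimodularity of the matrices of the isomorphisms, which is handled above through the inverse homomorphisms.
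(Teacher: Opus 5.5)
Your proposal is correct and follows the paper's proof: you factor $[\rho_r(f)]=[\rho_r(h_2)]\,[\rho_r(h)]\,[\rho_r(h_1)]$, note that the outer matrices are unimodular because $h_1,h_2$ are isomorphisms, and conclude by equivalence via Lemma~\ref{lem:determinantal-divisors}(c) or Corollary~\ref{cor:equiv-type}. Your use of the inverse homomorphisms to justify unimodularity makes explicit a step that the paper simply asserts.
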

\begin{proof}
Since $f=h_2\circ h\circ h_1$, then
$$[\rho_r(f)]
= [\rho_r(h_2)]\cdot [\rho_r(h)]\cdot [\rho_r(h_1)]$$
and, since $h_1$ and $h_2$ are isomorphisms, then $[\rho_r(h_1)]$ and $[\rho_r(h_2)]$ are invertible as integral matrices. Hence $[\rho_r(f)]$ and $[\rho_r(h)]$ are equivalent and the result follows from Corollary \ref{cor:equiv-type}.
\end{proof}

Now we show that, with a suitable choice of basis, we may assume that the 
matrix of the rational representation of a homomorphism of complex tori is in the Smith normal form.

\begin{proposition}\label{prop:smith-normal-form-homo}
Let $T_1=V_1/L_1$ and $T_2=V_2/L_2$ be complex tori and let
$f\:T_1\rightarrow T_2$ be a homomorphism of type $(d_1,\ldots,d_r)$.
Then there there exist bases of $L_1$ and $L_2$ with respect to which 
\begin{equation}\label{eq:smith-normal-form}
[\rho_r(f)]=\left(\begin{array}{cc}
D & 0\\ 0 & 0
\end{array}\right),
\end{equation}
where $D=\diag (d_1,\ldots,d_r)$.
\end{proposition}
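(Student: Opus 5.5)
Fix arbitrary bases $l_1,\ldots,l_{2g_1}$ of $L_1$ and $l'_1,\ldots,l'_{2g_2}$ of $L_2$, and let $A=[\rho_r(f)]\in M(2g_2\times 2g_1,\mathbb Z)$ be the corresponding matrix. By definition, the type $(d_1,\ldots,d_r)$ of $f$ is the type of $A$. Lemma \ref{lem:smith-normal-form} therefore gives matrices $B\in GL(2g_2,\mathbb Z)$ and $B'\in GL(2g_1,\mathbb Z)$ with
$$B\cdot A\cdot B'=\left(\begin{array}{cc} D & 0\\ 0 & 0\end{array}\right),\qquad D=\diag(d_1,\ldots,d_r).$$
The plan is to read $B$ and $B'$ as changes of basis of the lattices.

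Since $B'$ is invertible over $\mathbb Z$, the vectors $\tilde l_j:=\sum_i B'_{ij}\, l_i$ form a new $\mathbb Z$-basis of $L_1$. Likewise, since $B^{-1}\in GL(2g_2,\mathbb Z)$, the vectors $\tilde l'_k:=\sum_i (B^{-1})_{ik}\, l'_i$ form a new $\mathbb Z$-basis of $L_2$. Write $P$ for the matrix whose columns express the new basis of $L_1$ in the old one, and $Q$ for the analogous matrix for $L_2$. Then $P=B'$ and $Q=B^{-1}$.

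The standard change-of-basis formula for the linear map $\rho_r(f)\colon L_1\to L_2$ says that its matrix in the new bases is $Q^{-1}AP$. Substituting, this equals $B\,A\,B'$, which is exactly the block matrix displayed in the statement, \eqref{eq:smith-normal-form}.

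There is no real obstacle. The only point needing care is matching the conventions: $B$ multiplies on the left and therefore corresponds to the inverse of the base-change matrix in the target, while $B'$ multiplies on the right and is the base change in the source. The other thing to note is that integral invertibility is exactly what guarantees the new vectors are lattice bases, and not merely bases of $L_j\otimes\mathbb Q$.

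Finally, the bases of $V_1$ and $V_2$ are irrelevant here. If one also wants the Hurwitz equation to hold with the new bases, it suffices to replace the period matrices by $\Pi_1B'$ and $\Pi_2B^{-1}$; the analytic representation $[\rho_a(f)]$ is left unchanged.
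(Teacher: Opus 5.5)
Your proof is correct and follows the paper's argument: both take the Smith normal form of $[\rho_r(f)]$ and read the two unimodular matrices as changes of bases of $L_1$ and $L_2$. The paper phrases this through the Hurwitz equation, replacing the period matrices by $\Pi_1B^{-1}$ and $\Pi_2B'$ in its notation, which is exactly your closing remark (its $B,B'$ correspond to your $B'^{-1},B^{-1}$).
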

\begin{proof}
Let $A\Pi_1=\Pi_2R$ be the matricial equation associated to $f$, where 
$\Pi_1$ and $\Pi_2$ are period matrices for $T_1$ and $T_2$, respectively, and
$A=[\rho_a(f)]$ and $R=[\rho_r(f)]$, with respect to some choice of basis.
By Lemma \ref{lem:smith-normal-form}, there exist $B\in GL(2g_1,\mathbb Z)$ 
and $B'\in GL(2g_2,\mathbb Z)$ 
such that 
$R=B'R'B$ where $R'$ is of the form \eqref{eq:smith-normal-form-matrix} with $d_k=d_k(f)$. 
But then we have that 
$$A\Pi_1 B^{-1}=\Pi_2 B' R',$$
and since $\Pi_1':=\Pi_1B^{-1}$ and 
$\Pi_2':=\Pi_2 B' $
are period matrices for $T_1$ and $T_2$, respectively,
we see that this is once again the matricial equation for $f$, though with respect to other bases for $L_1$ and $L_2$.
\end{proof}

The next result illustrates the importance of the type, as it shows that 
the
type of a homomorphism completely determines its kernel.

\begin{theorem}\label{teo:type-detrmines-kernel}
Let $f\:T_1\rightarrow T_2$ be a homomorphism of complex tori of type $(d_1,\ldots,d_{r})$. Then 
$$\frac{\ker f}{(\ker f)_0}\cong \mathbb Z_{d_1}\times \cdots \times \mathbb Z_{d_{r}}.
$$
In particular, if $f$ has finite kernel, we have 
${\ker f}\cong \mathbb Z_{d_1}\times \cdots \times \mathbb Z_{d_{r}}.
$
\end{theorem}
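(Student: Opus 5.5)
By Proposition \ref{prop:smith-normal-form-homo}, I will fix bases $l_1,\ldots,l_{2g_1}$ of $L_1$ and $m_1,\ldots,m_{2g_2}$ of $L_2$ in which $R=[\rho_r(f)]$ has the form \eqref{eq:smith-normal-form}. Then $\rho_r(f)(l_k)=d_k m_k$ for $k\leq r$ and $\rho_r(f)(l_k)=0$ for $k>r$. Since $L_j$ spans $V_j$ over $\mathbb R$, I identify $V_j=L_j\otimes\mathbb R$ and work in real coordinates. There $\rho_a(f)$, viewed as an $\mathbb R$-linear map, is the $\mathbb R$-linear extension of $\rho_r(f)$, so its real matrix is $R$. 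In these coordinates $T_1=\mathbb R^{2g_1}/\mathbb Z^{2g_1}$ and $T_2=\mathbb R^{2g_2}/\mathbb Z^{2g_2}$, and $f$ is induced by $x\mapsto Rx$.

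Next I compute the kernel directly. A point $x=(x_1,\ldots,x_{2g_1})$ maps to $0$ in $T_2$ exactly when $Rx\in\mathbb Z^{2g_2}$, that is, when $d_kx_k\in\mathbb Z$ for $k\leq r$, with $x_k$ arbitrary for $k>r$. Hence
$$\ker f=\Bigl(\tfrac1{d_1}\mathbb Z/\mathbb Z\Bigr)\times\cdots\times\Bigl(\tfrac1{d_r}\mathbb Z/\mathbb Z\Bigr)\times (\mathbb R/\mathbb Z)^{2g_1-r}\subset (\mathbb R/\mathbb Z)^{2g_1}.$$
Here $\tfrac1{d_k}\mathbb Z/\mathbb Z\cong\mathbb Z_{d_k}$. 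The factor $S=\{0\}^r\times(\mathbb R/\mathbb Z)^{2g_1-r}$ is connected and contains $0$, and $\ker f$ is the disjoint union of its translates by the finitely many points of the finite part. Since $(\ker f)_0$ is the connected component of $0$, and the finite part is discrete, I get $(\ker f)_0=S$. The quotient is therefore $\prod_{k\le r}\mathbb Z_{d_k}$. When the kernel is finite, $r=2g_1$ and $S=0$, which gives the second statement.

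The main point needing care is the identification of the complex torus with the real torus. One must check that $\ker f$ and $(\ker f)_0$ are notions that depend only on the underlying real Lie group map. This is immediate, since both are defined set-theoretically and topologically. One must also check that the real-linear map $V_1\to V_2$ in the chosen lattice coordinates is exactly $R$. This holds because $\rho_a(f)$ is $\mathbb R$-linear and agrees with $\rho_r(f)$ on a basis of $L_1$ that is also an $\mathbb R$-basis of $V_1$. The consistency check $r=2(g_1-\dim(\ker f)_0)$ from the text matches $\dim_{\mathbb R}S=2g_1-r$.
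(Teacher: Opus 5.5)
Your proof is correct, and it handles the infinite-kernel case by a different route from the paper. The paper splits into two cases. For finite kernel it uses the Smith normal form to identify $\im f$ with $V'/L'$, where $L'$ is spanned by the first $2g_1$ vectors of the adapted basis of $L_2$. It then computes $\ker f=\ker\tilde f\cong L'/\rho_a(\tilde f)(L_1)$ for the isogeny $\tilde f\colon T_1\to\im f$. For infinite kernel it passes to the Stein factorization $f=g\circ p$ through $T_1/(\ker f)_0$ and extends a basis of the sublattice $L_1\cap V''$ to a basis of $L_1$. It then checks that $[\rho_r(f)]=([\rho_r(g)]\;\;0)$, so $f$ and $g$ have the same type, and applies the finite case to $g$, whose kernel is $\ker f/(\ker f)_0$.

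You instead forget the complex structure and write $f$ as $x\mapsto Rx$ on $\mathbb R^{2g_1}/\mathbb Z^{2g_1}$, with $R$ already in Smith form. You then read off $\ker f$ and $(\ker f)_0$ in a single computation. This buys you a shorter argument that treats both cases uniformly and needs neither the Stein factorization nor the basis-extension lemma. It also identifies $(\ker f)_0$ explicitly as the image of $\mathbb R l_{r+1}+\cdots+\mathbb R l_{2g_1}$, and so recovers $r=2(g_1-\dim(\ker f)_0)$ directly. The paper's route buys something different: it stays within complex tori and shows, as a by-product, that $f$ has the same type as the finite-kernel factor of its Stein factorization.
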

\begin{proof}
For $j=1,2$ we set $T_j=V_j/L_j$ and $g_j=\dim T_j$.

Assume first that $\ker f$ is finite, so that $r=2g_1$. 
By Proposition \ref{prop:smith-normal-form-homo},
we may assume that the rational representation of $f$ is in the Smith normal form 
$$[\rho_r(f)]=\left(\begin{array}{c}
D\\0
\end{array}\right),$$
where $D=\diag(d_1,\ldots,d_{2g_1})$,
with respect to  bases
$l_1,\ldots,l_{2g_1}$ and $l_1',\ldots,l_{2g_2}'$ of $L_1$ and $L_2$, respectively.
Note that, since $\ker f$ is finite, we have $g_1\leq g_2$.
Now,  the $j$-th column of $[\rho_r(f)]$ is given by the expression of 
$\rho_a(f)(l_j)$ with respect to the basis $l_1',\ldots,l_{2g_2}'$ of $L_2$, and hence we see that 
$\rho_a(f)(l_j)=d_jl_j'$
for $1\leq j\leq 2g_1$.
This shows that the image of $f$ is of the form $\im f=V'/L'$ where 
$L'$ is the subgroup of $L_2$ generated by $l_1',\ldots,l_{2g_1}'$.
Let $\widetilde f$ be the homomorphism $f$ having $\im f$ as codomain, so 
$\widetilde f\:T_1\rightarrow \im f$
is an isogeny with same kernel as $f$. 
With respect to the isomorphism $L'\cong \mathbb Z^{2g_1}$ given by the basis $l_1',\ldots,l_{2g_1}'$,
we have
$$
\ker f
=\ker \tilde f
\cong \frac{L'}{\rho_a(\tilde f)(L_1)}
\cong \frac{\mathbb Z\times \cdots \times \mathbb Z}{d_1\mathbb Z\times \cdots \times d_{2g_1}\mathbb Z}
$$
and the result follows in this case. 

Now assume $\ker f$ is not finite and let $h=\dim (\ker f)_0$. 
Let $f=g\circ p$ be the Stein factorization of $f$  (see \cite[Sec 1.2]{lange}), given by
$$p\:T_1\rightarrow T_1/(\ker f)_0
\ee
g\:T_1/(\ker f)_0\rightarrow  T_2,
$$
where $p$ is the projection 
and 
$$\ker g=\frac{\ker f}{(\ker f)_0}.$$
In particular, $g$ has finite kernel.
Since $(\ker f)_0$ is a subtorus of $T_1$ then it is of the form
$(\ker f)_0=V''/L''$, where $V''$ is a complex sub-vector space of $V_1$ and $L''=V''\cap L_1$. 
Note that $\dim V''=h$
and let  $l_1,\ldots, l_{2h}$ be a basis of $L''$. 
By \cite[Cor. 3, Chap. I]{cassels}, there exist
$l_1',\ldots, l_{2(g_1-h)}'\in L_1$  such that 
$l_1',\ldots, l_{2(g_1-h)}',l_1,\ldots,l_{2h}$ form a basis of  $L_1$.
Recall that $T_1/(\ker f)_0=(V_1/V'')/(L_1/L'')$ and 
let $\overline l$ be the class of $l$ in the quotient lattice $L_1/L''$,
 for every $l\in L_1$. 
Then 
$\overline l_1',\ldots, \overline l_{2(g_1-h)}'$ is a basis of $L_1/L''$
and, with respect to these bases, we have
$$[\rho_r(p)]=(I_{2(g_1-h)}\;\;0)\in M(2(g_1-h)\times 2g_1,\mathbb Z).$$
Choosing any basis for $L_2$, we consider the matrix $[\rho_r(g)]\in M(2g_2\times  2(g_1-h),\mathbb Z)$ and we have 
$$[\rho_r(f)]=[\rho_r(g)][\rho_r(p)]=[\rho_r(g)](I_{2(g_1-h)}\;\;0)
=([\rho_r(g)]\;\;0).$$
This shows that $f$ and $g$ have the same type, and the result follows from the previous case.
\end{proof}

Its clear from Theorem \ref{teo:type-detrmines-kernel} that the type of a homomorphism of complex tori is a much finer invariant than the degree or the exponent, and that it encompasses both of those invariants.

\subsection{Degree}\label{sec:degree}

Let $f\:T_1\rightarrow T_2$ be a homomorphism of complex tori. 
Recall that the degree of $f$ is defined as the order of its kernel, if finite, and as zero otherwise.
%
%
%
The following result on the degree was already known for isogenies, albeit with a different proof.

\begin{corollary}\label{cor:grauisog}
Let $f\:T_1\rightarrow T_2$ be a homomorphism of complex tori 
and let $g_1=\dim T_1$ and $h=\dim (\ker f)_0$.
Then $\deg(f)=\delta_{2g_1}(f)$ and $\delta_{2(g_1-h)}(f)$ is equal to the number of connected components of $\ker f$.
\end{corollary}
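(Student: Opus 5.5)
The plan is to read both claims off Theorem \ref{teo:type-detrmines-kernel}, using the relation $\delta_k(f)=d_1(f)\cdots d_k(f)$ together with the rank formula $r=2(g_1-h)$ established at the start of Section \ref{sec:divisor-homos}.

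First I will count components. Since $(\ker f)_0$ is a subgroup of $\ker f$ and the components of $\ker f$ are exactly its cosets, the number of connected components equals the order of $\ker f/(\ker f)_0$. By Theorem \ref{teo:type-detrmines-kernel}, this quotient is isomorphic to $\mathbb Z_{d_1}\times\cdots\times\mathbb Z_{d_r}$, where $r$ is the rank of $[\rho_r(f)]$. Its order is therefore $d_1\cdots d_r=\delta_r(f)$, and substituting $r=2(g_1-h)$ gives the second claim.

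Next I will handle the degree, splitting into two cases. If $\ker f$ is finite, then $h=0$, so $(\ker f)_0=\{0\}$ and $r=2g_1$. The previous step then gives $|\ker f|=\delta_{2g_1}(f)$, which is $\deg(f)$ by definition. If $\ker f$ is infinite, then $\deg(f)=0$ by definition. In this case $(\ker f)_0$ is positive-dimensional: a compact Lie group with finitely many components is finite as soon as its identity component is trivial, and the paper already records that $\ker f$ has finitely many components. Hence $h>0$ and $r=2(g_1-h)<2g_1$. Every minor of order $2g_1$ of $[\rho_r(f)]$ then vanishes, so $\delta_{2g_1}(f)=0$ by the convention for determinantal divisors, matching $\deg(f)=0$.

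The only point needing care is this infinite-kernel case, namely seeing that $\ker f$ infinite forces $h>0$. That follows from the finiteness of the number of components already noted in the introduction. The rest is direct bookkeeping from Theorem \ref{teo:type-detrmines-kernel}.
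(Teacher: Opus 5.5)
Your proof is correct and follows essentially the same route as the paper. Both claims are read off Theorem~\ref{teo:type-detrmines-kernel} using $\delta_k(f)=d_1(f)\cdots d_k(f)$ and $r=2(g_1-h)$; in the infinite-kernel case the rank is below $2g_1$, so $\delta_{2g_1}(f)=0=\deg(f)$. Your compact-Lie-group argument for why an infinite kernel forces $h>0$ simply makes explicit what the paper takes from its earlier remark that $f$ has finite kernel if and only if $r=2g_1$.
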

\begin{proof}
If $\ker f$ is finite then the result
follows directly from  Theorem \ref{teo:type-detrmines-kernel}, since 
$\delta_{2g_1}(f)=d_1(f)\cdots d_{2g_1}(f).$
If $\ker f$ is not finite, then the rank of $[\rho_r(f)]$ is smaller than $2g_1$ and we have $\delta_{2g_1}(f)=0=\deg (f)$.
Moreover, by Theorem \ref{teo:type-detrmines-kernel} we have 
$|\ker f/(\ker f)_0|=d_1(f)\cdots d_{r}(f)$
and since $r=2(g_1-h)$, the result follows.
\end{proof}

\subsection{Exponent}

Let $f\:T_1\rightarrow T_2$ be a homomorphism of complex tori having finite kernel. 
Recall that the exponent $e(f)$ of $f$ is the 
exponent of the finite group $\ker f$, that is, $e(f)$ is the smallest $n\in\mathbb Z_{>0}$ such that  $nt=0$ for all $t\in\ker f$.
Recall also that, if $f$ is an isogeny, then the {inverse isogeny} of $f$ is the isogeny $f'\:T_2\rightarrow T_1$ defined by the compositions
$f\circ f'=e(f)$ and $f'\circ f=e(f)$.


\begin{corollary}\label{cor:exponent}
Let $f\:T_1\rightarrow T_2$ be a homomorphism of complex tori having finite kernel and let $g_1=\dim T_1$. Then 
$$e(f)=d_{2g_1}(f) 
\ee \deg(f)=e(f)\cdot \delta_{2g_1-1}(f).$$

Moreover, if $f$ is an isogeny and $f'$ is its inverse isogeny,  
then the type of $f'$ is 
$$(d_1(f'),\ldots,d_{2g_1}(f'))
=\left(1,\frac{d_{2g_1}(f)}{d_{2g_1-1}(f)}, \frac{d_{2g_1}(f)}{d_{2g_1-2}(f)},\ldots,\frac{d_{2g_1}(f)}{d_{1}(f)}\right).$$
In particular, 
$e(f)=d_1(f)\cdot e(f')$.
\end{corollary}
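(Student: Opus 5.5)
The first two claims follow from Theorem \ref{teo:type-detrmines-kernel}. Since $\ker f$ is finite, $r=2g_1$ and $\ker f\cong \mathbb Z_{d_1}\times\cdots\times\mathbb Z_{d_{2g_1}}$ with $d_k\mid d_{k+1}$. The exponent of such a product is the least common multiple of the $d_k$, and because of the divisibility chain this is $d_{2g_1}(f)$. For the degree, Corollary \ref{cor:grauisog} gives $\deg(f)=\delta_{2g_1}(f)$. The relation $\delta_{2g_1}(f)=\delta_{2g_1-1}(f)\,d_{2g_1}(f)$ then yields $\deg(f)=e(f)\,\delta_{2g_1-1}(f)$.

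For the inverse isogeny, the plan is to compute $[\rho_r(f')]$ explicitly in well-chosen bases. Since $f$ is an isogeny, $g_1=g_2$. By Proposition \ref{prop:smith-normal-form-homo} we can choose bases of $L_1$ and $L_2$ in which $[\rho_r(f)]=D=\diag(d_1,\ldots,d_{2g_1})$, a square invertible rational matrix. Write $e=e(f)=d_{2g_1}$. The relation $f'\circ f=e$ gives $[\rho_r(f')]\,D=eI$, so in these same bases
$$[\rho_r(f')]=eD^{-1}=\diag\left(\frac{e}{d_1},\ldots,\frac{e}{d_{2g_1}}\right).$$
This matrix is integral because each $d_k\mid e$. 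Its entries reversed,
$$\frac{e}{d_{2g_1}}=1,\ \frac{e}{d_{2g_1-1}},\ \ldots,\ \frac{e}{d_1},$$
form a divisibility chain. Indeed, if $d_k\mid d_{k+1}$, then $e/d_{k+1}$ divides $e/d_k$, since their quotient is $d_{k+1}/d_k\in\mathbb Z$.

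A permutation of the basis vectors is an invertible integral matrix, so conjugating by the reversal permutation shows that $[\rho_r(f')]$ is equivalent to the diagonal matrix with these entries in increasing divisibility order. That matrix is in Smith normal form with positive entries. By the uniqueness in Lemma \ref{lem:smith-normal-form} (or Corollary \ref{cor:equiv-type}), its diagonal is the type of $f'$, which is the claimed formula.

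For the last claim, $f'$ is again an isogeny, so it has finite kernel and the first part applies to it. Hence $e(f')=d_{2g_1}(f')=e/d_1(f)$, which gives $e(f)=d_1(f)\,e(f')$.

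The only delicate point is the use of $f'\circ f=e(f)$ at the level of rational representations. Multiplication by $n$ on $T_1$ has rational representation $nI$ in any basis, and rational representations are functorial, so the matrix identity $[\rho_r(f')]\,[\rho_r(f)]=eI$ holds in the chosen bases. With that in place, the remaining steps are bookkeeping.
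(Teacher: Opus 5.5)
Your proof is correct and follows essentially the same route as the paper. You read off $e(f)=d_{2g_1}(f)$ and $\deg(f)$ from the kernel decomposition of Theorem \ref{teo:type-detrmines-kernel}, then put $[\rho_r(f)]$ in Smith normal form and compute $[\rho_r(f')]=e(f)[\rho_r(f)]^{-1}$ as a diagonal matrix. Your extra steps only make explicit what the paper leaves implicit: reversing the diagonal by a permutation to obtain a divisibility chain, invoking uniqueness of the Smith normal form, and applying the first part to $f'$ to get $e(f)=d_1(f)\,e(f')$.
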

\begin{proof}
The first assertions follow directly from Theorem \ref{teo:type-detrmines-kernel} since $d_k(f)|d_{k+1}(f)$.

Now, if $f'$ is the inverse isogeny of $f$ then $[\rho_r(f')]={e(f)}[\rho_r(f)]^{-1}$, with respect to any choice of basis. By Proposition \ref{prop:smith-normal-form-homo}, we can assume that $[\rho_r(f)]$ is in the Smith normal form and hence 
$[\rho_r(f)]=\diag(d_1(f),\ldots,d_{2g_1}(f))$.
Since $[\rho_r(f)]^{-1}=\diag (\frac1{d_1(f)},\ldots,\frac1{d_{2g}(f)})$, then
$$[\rho_r(f')]=\diag \left(\frac{d_{2g_1}(f)}{d_{1}(f)},\frac{d_{2g_1}(f)}{d_{2}(f)},\ldots, \frac{d_{2g_1}(f)}{d_{2g_1-1}(f)}, \frac{d_{2g_1}(f)}{d_{2g_1}(f)}\right),$$
thus showing the last assertion.
%
\end{proof}

In particular this result recovers the well-known fact that a isogeny $f$ and its inverse isogeny have the same exponent if and only if $f$ is \emph{primitive}, that is, $f$ does not factor through a multiplication by some $n\in\mathbb Z$ with $n\geq 2$. Indeed, $f$ is primitive if and only if the greatest common divisor of the entries of $[\rho_r(f)]$ is equal to 1, that is, 
$\delta_1(f)=1$ or, equivalently, $d_1(f)=1$. Note moreover that the inverse isogeny $f'$ is always primitive, regardless of whether $f$ is or not.

\smallskip

The following result 
shows an alternative, and perhaps more direct, way of computing the exponent of an isogeny and of its inverse isogeny. 

\begin{corollary}\label{cor:expoente2}
Let $f$  be an isogeny of complex tori.
Then $$e(f)=\min \{n\in\mathbb Z_{>0}\ | \ n[\rho_r(f)]^{-1}\text{ is integral}\}.$$

Moreover, if $f'$ is the inverse isogeny of $f$, then 
$$e(f')=\min \left\{n\in\mathbb Z_{>0}\ \left| \ \frac{n}{e(f)}[\rho_r(f)]\text{ is integral}\right.\right\}.$$
\end{corollary}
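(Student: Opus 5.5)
We reduce everything to the Smith normal form. The condition ``$n[\rho_r(f)]^{-1}$ is integral'' does not depend on the choice of bases. Indeed, if $R=[\rho_r(f)]$ and $R'=BRB'$ with $B,B'\in GL(2g_1,\mathbb Z)$, then $nR'^{-1}=B'^{-1}(nR^{-1})B^{-1}$. Since $B^{\pm1},B'^{\pm1}$ are integral, $nR^{-1}$ is integral if and only if $nR'^{-1}$ is. Note that $f$ is an isogeny, so $g_1=g_2$ and $R$ is square and invertible over $\mathbb Q$. By Proposition \ref{prop:smith-normal-form-homo} we may therefore assume $R=\diag(d_1,\ldots,d_{2g_1})$ with $d_k=d_k(f)$.

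For the first formula, $nR^{-1}=\diag(n/d_1,\ldots,n/d_{2g_1})$. This is integral exactly when $d_k\mid n$ for all $k$. Since $d_k\mid d_{2g_1}$, that is equivalent to $d_{2g_1}\mid n$. So the minimum is $d_{2g_1}(f)$, which equals $e(f)$ by Corollary \ref{cor:exponent}.

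For the second formula, use the same diagonal form. The matrix $\frac{n}{e(f)}R=\diag(nd_k/d_{2g_1})_k$ is integral iff $\frac{d_{2g_1}}{d_k}\mid n$ for all $k$. The largest of these quotients is $d_{2g_1}/d_1$, and each $d_{2g_1}/d_k$ divides it, because $d_{2g_1}/d_1=(d_{2g_1}/d_k)(d_k/d_1)$ and $d_1\mid d_k$. Hence the minimum is $d_{2g_1}(f)/d_1(f)$. By the type of $f'$ computed in Corollary \ref{cor:exponent}, its last invariant factor is $d_{2g_1}(f)/d_1(f)$, so this equals $e(f')$ by the first part of Corollary \ref{cor:exponent} applied to $f'$. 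Equivalently, $e(f)=d_1(f)e(f')$. We could also argue directly: $[\rho_r(f')]^{-1}=\frac{1}{e(f)}[\rho_r(f)]$, so the second formula is the first one applied to $f'$.

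The only real point is the basis-independence of the integrality condition, together with the divisibility chain that reduces ``all quotients divide $n$'' to a single quotient. Both are routine, so no step should be a genuine obstacle.
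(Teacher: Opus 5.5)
Your proof is correct and follows essentially the same route as the paper. Like the paper, you show the integrality condition is basis-independent, reduce to the Smith normal form $\diag(d_1(f),\ldots,d_{2g}(f))$, and read off the minimum $d_{2g}(f)=e(f)$ via Corollary \ref{cor:exponent}. For the second formula the paper only says it follows from Corollary \ref{cor:exponent}; your explicit computation of the minimum $d_{2g}(f)/d_1(f)$ supplies that step, as does your alternative of applying the first formula to $f'$ via $[\rho_r(f')]^{-1}=\frac{1}{e(f)}[\rho_r(f)]$.
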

\begin{proof}
Assume $f\:T_1\rightarrow T_2$  where $T_j=V_j/L_j$ for $j=1,2$ and set $g:=\dim(T_1)=\dim(T_2)$. 
Let us first see that the smallest $n\in\mathbb Z_{>0}$ such that $n[\rho_r(f)]^{-1}$ is integral is independent on the choice of basis. 
Indeed, if $R$ and $R'$ are matrices of $\rho_r(f)$ with respect to distinct choices of bases  for $L_1$ and $L_2$, then 
$R'=B\cdot R\cdot B'$, for some $B,B'\in GL(2g,\mathbb Z)$. 
So, for any $n\in\mathbb Z_{>0}$, if $nR^{-1}$ is integral
then 
$$nR'^{-1}=n(B'^{-1}\cdot R^{-1}\cdot B^{-1})=B'^{-1}\cdot nR^{-1}\cdot B^{-1}$$ 
is integral, since $B^{-1}$ and $B'^{-1}$ are already integral.
Thus we can assume that $R=[\rho_r(f)]$ is in the Smith normal form \eqref{eq:smith-normal-form}.
Then $R=\diag(d_1(f),\ldots,d_{2g}(f))$ so that $R^{-1}=\diag(\frac1{d_1(f)},\ldots,\frac1{d_{2g}(f)})$. 
Since $d_k|d_{k+1}$ for every $1\leq k\leq 2g$, we see that the 
smallest $n\in\mathbb Z_{>0}$ such that $nR^{-1}$ is integral is $n=d_{2g}(f)$, and the result follows from Corollary \ref{cor:exponent}.
\end{proof}

Note that, in the previous result,  the minimum can be replaced by the greatest common divisor. 

\begin{corollary}\label{cor:composition-exponent}
Let $f\:T_1\rightarrow T_2$ and $g\:T_2\rightarrow T_3$ be isogenies of complex tori.
Then $e(g\circ f)$ divides $e(g)\cdot e(f)$.
\end{corollary}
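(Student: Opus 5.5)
Since the corollary comes right after Corollary \ref{cor:expoente2}, the natural plan is to use that characterization of the exponent together with the multiplicativity of the rational representation. Fix bases of the lattices $L_1,L_2,L_3$ of $T_1,T_2,T_3$. Then $[\rho_r(g\circ f)]=[\rho_r(g)]\cdot[\rho_r(f)]$. Since $f$ and $g$ are isogenies, all three tori have the same dimension $g$. The matrices $[\rho_r(f)]$ and $[\rho_r(g)]$ are therefore square of order $2g$ with nonzero determinant (equal to the degrees, by Corollary \ref{cor:grauisog}), so they are invertible over $\mathbb Q$. Also $g\circ f$ is again an isogeny, being surjective with finite kernel, so Corollary \ref{cor:expoente2} applies to it as well.

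The key step is the identity
$$e(g)e(f)\,[\rho_r(g\circ f)]^{-1}=\bigl(e(f)[\rho_r(f)]^{-1}\bigr)\cdot\bigl(e(g)[\rho_r(g)]^{-1}\bigr).$$
By Corollary \ref{cor:expoente2}, each factor on the right is integral, so $n=e(g)e(f)$ makes $n[\rho_r(g\circ f)]^{-1}$ integral. Hence $e(g\circ f)\le e(g)e(f)$, but we need divisibility, not just the inequality.

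To get divisibility, observe that
$$S=\{n\in\mathbb Z\ |\ n[\rho_r(g\circ f)]^{-1}\text{ is integral}\}$$
is an ideal of $\mathbb Z$, since it is closed under sums and integer multiples. Its positive generator is its minimal positive element, which is $e(g\circ f)$ by Corollary \ref{cor:expoente2}; this is also the remark after that corollary that the minimum can be replaced by the gcd. As $e(g)e(f)\in S$, it follows that $e(g\circ f)$ divides $e(g)e(f)$.

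An alternative route avoids matrices: for $t\in\ker(g\circ f)$ we have $f(t)\in\ker g$, so $e(g)f(t)=0$, i.e.\ $e(g)t\in\ker f$, hence $e(f)e(g)t=0$. Thus $e(g)e(f)$ annihilates $\ker(g\circ f)$, and it is divisible by the group exponent. No step here is really an obstacle. The only point needing care is passing from "$e(g)e(f)$ works" to divisibility, which is handled by the ideal (or group-exponent) observation.
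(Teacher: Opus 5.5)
Your proposal is correct and follows the paper's proof: factor $[\rho_r(g\circ f)]^{-1}=[\rho_r(f)]^{-1}[\rho_r(g)]^{-1}$, observe that $e(g)e(f)$ clears the denominators, and invoke Corollary~\ref{cor:expoente2}. Your ideal observation makes explicit the step from minimality to divisibility, which the paper leaves to its remark that the minimum can be replaced by the gcd. Your alternative argument ($e(g)t\in\ker f$ for $t\in\ker(g\circ f)$) is also valid and more elementary, since it uses only finiteness of the kernels and never the rational representation.
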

\begin{proof}
Since $[\rho_r(g\circ f)]=[\rho_r(g)]\cdot [\rho_r(f)]$ then 
$[\rho_r(g\circ f)]^{-1}=[\rho_r(f)]^{-1}\cdot [\rho_r(g)]^{-1}$
so that, for $n:=e(g)\cdot e(f)$, the matrix $n[\rho_r(g\circ f)]^{-1}$ is integral.
Hence $n$ is a multiple of $e(g\circ f)$,
 by Corollary \ref{cor:expoente2}.
\end{proof}

\subsection{Type of a polarization}

Let $T=V/L$ be a complex torus of dimension $g$ admiting a polarization $H$. Then there exists a basis of $L$ with respect to which the matrix of the imaginary part $E$ of $H$ is of the form
$$\left(\begin{array}{cc}
0 & D\\-D & 0
\end{array}\right),$$
where $D=\diag(d_1,\ldots,d_g)$ and $d_1,\ldots,d_g\in\mathbb Z_{>0}$ are such that $d_k|d_{k+1}$ for $1\leq k\leq g$.
The integers $d_k$ are unique with this property and we say that 
$(d_1,\ldots,d_g)$ is the \emph{type} and that $e(H):=d_g$ is the \emph{exponent} of the polarization $H$ (see \cite[Sec. 4.1]{lange}).
Note that type of the above matrix, as defined in Section \ref{sec:divisors-matrices}, is $(d_1,d_1,d_2,d_2,\ldots,d_g,d_g)$.
Therefore, since the type of a matrix is preserved by equivalence, 
if $[E]$ is the matrix of $E$ with respect to any choice of basis for $L$, we have that 
$$d_k
=d_{2k}([E])
=\frac{\delta_{2k}([E])}{\delta_{2k-1}([E])}
,$$
for $1\leq k\leq g$, where we set $\delta_0:=1$.
Moreover, recall that $H$ induces an isogeny $\lambda_H\:T\rightarrow\widehat T$ between $T$ and its dual. 
It is well-known that 
$\deg(\lambda_H)=(d_1\cdots d_g)^2$ and $e(\lambda_H)=e(H)=d_g$.  

\begin{proposition}\label{prop:type-polarization}
Let $H$ be a polarization of type $(d_1,\ldots,d_g)$ on a complex torus $T$ of dimension $g$.
Then $d_k=d_{2k}(\lambda_H)$ 
for $1\leq k\leq g$ and, in particular, $e(H)=d_{2g}(\lambda_H)$.
\end{proposition}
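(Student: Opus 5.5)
The plan is to show that, for suitable bases, the rational representation of $\lambda_H$ is given, up to sign and transposition, by the matrix $[E]$ of the alternating form $E$ on $L$. Then $\lambda_H$ and $[E]$ have the same type, and the claim follows from the computation before the statement: the type of $[E]$ is $(d_1,d_1,\ldots,d_g,d_g)$, so $d_{2k}([E])=d_k$.

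\textbf{Step 1.} Recall from \cite[Sec. 2.4]{lange} that the dual torus is $\widehat T=\overline\Omega/\widehat L$. Here $\overline\Omega=\mathrm{Hom}_{\overline{\mathbb C}}(V,\mathbb C)$ and
$$\widehat L=\{\ell\in\overline\Omega \mid \mathrm{Im}\,\ell(L)\subseteq\mathbb Z\}.$$
The map $\mathrm{Im}\colon \overline\Omega\to \mathrm{Hom}_{\mathbb R}(V,\mathbb R)$ is an $\mathbb R$-linear isomorphism. It identifies $\widehat L$ with the dual lattice $L^*=\mathrm{Hom}(L,\mathbb Z)$. The analytic representation of $\lambda_H$ is $v\mapsto H(v,\cdot)$, and
$$\mathrm{Im}\,H(v,\cdot)=E(v,\cdot).$$
So under this identification, $\rho_r(\lambda_H)\colon L\to L^*$ is $l\mapsto E(l,\cdot)|_L$.

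\textbf{Step 2.} Fix a basis $l_1,\ldots,l_{2g}$ of $L$ and let $l_1^*,\ldots,l_{2g}^*$ be the dual basis of $L^*$. Transport it to a basis of $\widehat L$. The $j$-th column of $[\rho_r(\lambda_H)]$ has entries $E(l_j,l_i)$, so $[\rho_r(\lambda_H)]=[E]^t=-[E]$. Transposition preserves minors up to reindexing, and a sign change preserves them up to sign. Hence $\delta_k(\lambda_H)=\delta_k([E])$ for all $k$, and therefore $d_k(\lambda_H)=d_k([E])$.

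\textbf{Step 3.} Choose the symplectic basis, in which $[E]$ is the block matrix with $D=\diag(d_1,\ldots,d_g)$. As noted before the statement, its type is $(d_1,d_1,\ldots,d_g,d_g)$. Thus $d_{2k}(\lambda_H)=d_{2k}([E])=d_k$, and taking $k=g$ gives $e(H)=d_g=d_{2g}(\lambda_H)$. This last equality also agrees with Corollary \ref{cor:exponent}, since $e(\lambda_H)=d_{2g}(\lambda_H)$.

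The main obstacle is Step 1: pinning down the identification of $\widehat L$ with the dual lattice and checking that $\rho_r(\lambda_H)$ really is $E$ viewed as a map $L\to L^*$. This depends on the conventions used for $\widehat T$ and $\lambda_H$. Any other convention changes the matrix only by signs, transposes, or unimodular base changes, and by Lemma \ref{lem:determinantal-divisors}(c) none of these alters the determinantal divisors.
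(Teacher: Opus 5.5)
Your proposal is correct and follows the paper's proof. Both identify $[\rho_r(\lambda_H)]$ with the matrix $[E]$ of $E$ and then read off $d_{2k}([E])=d_k$ from the symplectic normal form. You are more careful than the paper: you derive, via $\widehat L\cong\mathrm{Hom}(L,\mathbb Z)$ and $\mathrm{Im}\,H(v,\cdot)=E(v,\cdot)$, that with dual-basis conventions the matrix is $[E]^t=-[E]$, and you note this has the same determinantal divisors. The paper simply asserts equality.
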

\begin{proof}
The statement follows from the fact that  the matrix $[\rho_r(\lambda_H)]$ is equal to the matrix of the imaginary part $E$ of $H$.
\end{proof}

\printbibliography

@book{newman,
	Author = {Newman, Morris},
	Publisher = {Academic Press},
	Title = {Integral matrices},
	edition = {1st edition},
	Year = {1972}}

@book{lange,
	Author = {Birkenhake , Christina and Lange, Herbert},
	Publisher = {Springer},
	Title = {Complex Abelian Varieties - second, augmented edition},
	edition = {2nd edition},
	Year = {2000}}

@book{gantamacher,
	Author = {Gantamacher, F.R.},
	Publisher = {AMS Chelsea Publishing},
	Title = {The theory of matrices - volume 1},
	edition = {1st edition},
	Year = {1959}}

@book{cassels,
	Author = {Cassels, J. W. S.},
	Title = {An Introduction to the Geometry of Numbers},
	edition = {second printing, corrected},
	Publisher = {Springer},
	Year = {1971}}

\bigskip

\noindent{
Juliana Coelho \\ 
Instituto de Matem\'atica e Estat\'istica\\ 
Universidade Federal Fluminense (UFF)\\ 
Rua Prof. Marcos Waldemar de Freitas Reis 
- 24210-201 - Niter\'oi -  RJ,  Brasil}\\
{\smallsl E-mail address: \small\verb?julianacoelhochaves@id.uff.br?}

\end{document}